\newtheorem{lem}{Lemma}[section]
\newtheorem{cor}[lem]{Corollary}
\newtheorem{thm}[lem]{Theorem}
\theoremstyle{definition}
\newtheorem{defi}[lem]{Definition}
\theoremstyle{remark}
\newtheorem{rem}[lem]{Remark}
\numberwithin{equation}{section}
\newcommand{\ep}{\varepsilon}
\newcommand{\ue}{u^\ep}
\newcommand{\n}{\nabla }
\newcommand{\om}{\Omega}
\newcommand{\emutep}{e^{\mu t^\ep/\ep }}
\newcommand{\ombar}{\overline{\Omega}}
\newcommand{\eun}{\displaystyle{\frac{1}{\ep}}}
\newcommand{\R}{\mathbb{R}}
\newcommand{\vsp}{\vspace{8pt}}
\newcommand{\di}{\displaystyle}
\newcommand{\Pe}{(P^{\;\!\ep})}
\newcommand{\Pz}{(P^{\;\!0})}
\newcommand {\Q}{Q_T}
\newcommand{\regionunzero}{\om _0 ^{(1)}}
\newcommand{\regionzerozero}{\om _0 ^{(0)}}
\newcommand{\regionun}{\om _t ^{(1)}}
\newcommand{\regionzero}{\om _t ^{(0)}}
\newcommand{\support}{\om _t ^{\star}}
\newcommand{\gammasupport}{\Gamma _t ^{\star}}
\newcommand{\nusupport}{\nu _t ^{\star}}
\newcommand{\C}{\mathcal C}
\newcommand{\epai}{\alpha _0}
\title{Interface dynamics of the porous medium equation with a bistable reaction term}
\author{ }
\date{}
\begin{document}

\maketitle \vspace{-20 mm}

\begin{center}

{\large\bf Matthieu Alfaro\footnote{The first author is supported
by the French "Agence Nationale de la Recherche" within the
project IDEE
(ANR-2010-0112-01).} }\\[1ex]
I3M, Universit\'e de Montpellier 2,\\
CC051, Place Eug\`ene Bataillon, 34095 Montpellier Cedex 5, France,\\
[2ex]

{\large\bf Danielle Hilhorst }\\[1ex]
CNRS and Laboratoire de Math\'ematiques,\\
Universit\'e de Paris-Sud 11, 91405 Orsay Cedex, France. \\
[2ex]

\end{center}

\vspace{15pt}


\begin{abstract}

We consider a degenerate partial differential equation arising in
population dynamics, namely the porous medium equation with a
bistable reaction term. We study its asymptotic behavior as a
small parameter, related to the thickness of a diffuse interface,
tends to zero. We prove the rapid formation of transition layers
which then propagate. We prove the convergence to a sharp
interface limit whose normal velocity, at each point, is that of
the underlying
degenerate travelling wave. \\

\noindent{\underline{Key Words:}} degenerate diffusion, singular
perturbation, sharp interface limit, population
dynamics.\footnote{AMS Subject Classifications: 35K65, 35B25,
35R35, 92D25.}
\end{abstract}

\section{Introduction}\label{intro-poreux}

In this paper we consider the rescaled porous medium equation with
a bistable reaction term
\[
 \Pe \quad\begin{cases}
 u_t=\ep \Delta (u^m)+\eun f(u)&\text{in }\om \times (0,\infty)\vspace{3pt}\\
 \di \frac{\partial (u^m)}{\partial \nu} = 0 &\text{on }\partial \om \times (0,\infty)\vspace{3pt}\\
 u(x,0)=u_0(x) &\text{in }\om\,,
 \end{cases}
\]
and study the sharp interface limit as $\ep\to 0$. Here  $\om$ is
a smooth bounded domain in $\R^N$ ($N\geq 2$), $\nu$ is the
Euclidian unit normal vector exterior to $\partial \om$ and $m
>1$.

We assume that $f$ is smooth, has exactly three zeros $0<a<1$ such
that
\begin{equation}\label{der-f-poreux}
f'(0)<0\,, \qquad f'(a)>0\,, \qquad f'(1)<0\,,
\end{equation}
and that
\begin{equation}\label{positive-speed}
\int _ {0} ^ {1} m u^{m-1} f(u)\, du >0\,.
\end{equation}
The above assumption implies that the speed of the underlying
degenerate travelling wave is positive (see subsection
\ref{ss:materials}), so that the region enclosed by the limit
interface is expanding (see below). This explains why the
requirement \eqref{positive-speed} is convenient for the study of
invasion processes.

As far as the initial data is concerned, we assume that $0\leq u_0
\leq M$ (for some $M>a$) is a $C^2(\ombar)$ function with compact
support
$$ Supp\, u_0:={\rm Cl}\{x\in\om:\, u_0(x)>0\} \subset\subset \om\,. $$
Furthermore we define the {\it initial interface} $\Gamma _0$ by
$$
\Gamma _0:=\{x\in\om:\, u_0(x)=a \}\,,
$$
and suppose that $\Gamma _0$ is a smooth hypersurface without
boundary, such that, $n$ being the Euclidian unit normal vector
exterior to $\Gamma _0$,
\begin{equation}\label{dalltint-poreux}
\Gamma _0 \subset\subset \Omega \quad \mbox { and } \quad \n
u_0(x) \neq 0\quad\text{if $x\in\Gamma _0$\,,}
\end{equation}
\begin{equation}\label{initial-data-poreux}
u_0>a \quad \text { in } \quad \regionunzero\,,\quad u_0<a \quad
\text { in } \quad \regionzerozero\,,
\end{equation}
where $\regionunzero$ denotes the region enclosed by $\Gamma _0$
and $\regionzerozero$ the region enclosed between $\partial \om$
and $\Gamma _0$.

\vskip 8pt Problem $\Pe$ possesses a unique weak solution $\ue$ as
it is explained in  Section \ref{s:preliminaries}. As $\ep
\rightarrow 0$, by formally neglecting the diffusion term, we see
that, in the very early stage, the value of $\ue$ quickly becomes
close to either $1$ or $0$ in most part of $\Omega$, creating a
steep interface (transition layers) between the regions
$\{\ue\approx 1\}$ and $\{\ue\approx 0\}$. Once such an interface
develops, the diffusion term is large near the interface and comes
to balance with the reaction term. As a result, the interface
ceases rapid development and starts to propagate in a slower time
scale. Therefore the limit solution $\tilde u (x,t)$ will be a
step function taking the value $1$ on one side of the moving
interface, and $0$ on the other side.

We shall prove that this sharp interface limit, which we denote by
$\Gamma_t$, obeys the law of motion
\[
 \Pz\quad\begin{cases}
 \, V_{n}=c^*
 \quad \text { on } \Gamma_t \vspace{3pt}\\
 \, \Gamma_t\big|_{t=0}=\Gamma_0\,,
\end{cases}
\]
where $V_n$ is the normal velocity of $\Gamma _t$ in the exterior
direction, and $c^*$ the positive speed of the underlying
travelling wave (see subsection \ref{ss:materials}). Problem $\Pz$
possesses a unique smooth solution on $[0,T^{max})$ for some
$T^{max}>0$. We denote this solution by $\Gamma =\cup _{0\leq t <
T^{max}} (\Gamma_t\times\{t\})$. From now on, we fix $0<T<T^{max}$
and work on $[0,T]$.

We set
\[
Q_T:=\om \times (0,T)\,,
\]
and, for each $t\in [0,T]$, we denote by $\regionun$ the region
enclosed by the hypersurface $\Gamma_t$, and by $\regionzero$ the
region enclosed between $\partial \om$ and $\Gamma_t$. We define a
step function $\tilde u(x,t)$ by
\begin{equation}\label{u}
\tilde u(x,t):=\begin{cases}
\, 1 &\text{in } \regionun\vspace{3pt}\\
\, 0 &\text{in } \regionzero
\end{cases} \quad\text{for } t\in[0,T]\,,
\end{equation}
which represents the formal asymptotic limit of $\ue$ as $\ep\to
0$.

\vskip 8pt Our main result, Theorem \ref{width}, describes both
the emergence and the propagation of the layers. First, it gives
the profile of the solution after a very short initial period: the
solution $\ue$ quickly becomes close to $1$ or $0$, except in a
small neighborhood of the initial interface $\Gamma _0$, creating
a steep transition layer around $\Gamma _0$ ({\it generation of
interface}). The time needed to develop such a transition layer,
which we will denote by $t ^\ep$, is $\mathcal O (\ep|\ln\ep|)$.
Then the theorem states that the solution $\ue$ remains close to
the step function $\tilde u$ on the time interval $[t^\ep,T]$
({\it motion of interface}).

\begin{thm}[Generation and motion of interface]\label{width}
Assume $m\geq 2$. Define $\mu$ as the derivative of $f(u)$ at the
unstable equilibrium $u=a$, that is
$$
\mu=f'(a)\,.
$$

Let $\eta \in(0,\min(a,1-a))$ be arbitrary. Fix $\epai >0$
arbitrarily small. Then there exist positive constants $\ep _0 $
and $\C$ such that, for all $\,\ep \in (0,\ep _0)$ and for all
$(x,t)$ such that $\,t^\ep \leq t \leq T$, where
\begin{equation}\label{tep}
t^\ep:=\mu ^{-1}  \ep |\ln \ep|\,,
\end{equation}
we have
\begin{equation}\label{resultat}
\ue(x,t) \in
\begin{cases}
\,[1-2\ep,1+\eta]&\quad\text{if}\quad x\in\om
_t^{(1)}\setminus\mathcal N_{\C\ep|\ln\ep|}(\Gamma _t)\vsp\\
\,[0,1+\eta]&\quad\text{if}\quad
x\in\Omega\vsp\\
\,[0,\eta]&\quad\text{if}\quad x\in\om_t^{(0)}\setminus\mathcal
N_{\epai}(\Gamma _t)\,,
\end{cases}
\end{equation}
where $\mathcal N _r(\Gamma _t):=\{x\in \om:\,  dist (x,\Gamma
_t)<r\}$ denotes the $r$-tubular neighborhood of $\Gamma _t$.
\end{thm}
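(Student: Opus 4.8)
The plan is to follow the two-step scheme --- \emph{generation} of the interface on $[0,t^\ep]$, then \emph{motion} of the interface on $[t^\ep,T]$ --- using sub- and super-solutions adapted to the degenerate operator $\ep\Delta(u^m)$; the hypothesis $m\ge 2$ will be used precisely to keep $\Delta\big((\cdot)^m\big)$ bounded near the edge of compactly supported profiles. I would first record the two ingredients. (i) The degenerate travelling wave $(U_0,c^*)$ of subsection~\ref{ss:materials}: a nonincreasing profile with $U_0(-\infty)=1$, $1-U_0(z)\sim Ke^{\lambda z}$ as $z\to-\infty$ for some $\lambda,K>0$, and $U_0\equiv 0$ on $[z_0,\infty)$, solving $(U_0^m)''+c^*(U_0)'+f(U_0)=0$. (ii) A smooth function $d(x,t)$ that coincides with the signed distance to $\Gamma_t$ (negative in $\regionun$) on a fixed neighbourhood $\mathcal N_{2\epai}(\Gamma_t)$ and is modified and bounded away from $\Gamma$; it satisfies $|\n d|=1$ and $d_t=-c^*$ on $\Gamma$, because $V_n\equiv c^*$ in $\Pz$. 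For the middle line of \eqref{resultat} I would note that the spatially constant $\overline y(t)$ solving $\dot{\overline y}=\eun f(\overline y)$, $\overline y(0)=M$, is a super-solution (its Laplacian vanishes and $f\le 0$ on $[1,M]$), so $0\le\ue\le\overline y(t)$; since $f\le-\delta<0$ on $[1+\eta,M]$ one gets $\overline y(t)\le 1+\eta$ as soon as $t\ge t^\ep$, because $t^\ep=\mu^{-1}\ep|\ln\ep|\gg\ep$.

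\textbf{Generation of interface.} I would introduce $Y(\tau,\xi)$, the solution of $Y_\tau=f(Y)$, $Y(0,\xi)=\xi$, and the barriers
\[
w^\pm(x,t):=Y\!\Big(\frac{t}{\ep},\ u_0(x)\pm\ep\,\CG\big(\emut-1\big)\Big),
\]
for a constant $\CG$ to be fixed large. A direct computation gives $w^\pm_t-\eun f(w^\pm)=\pm\CG\,\mu\,\emut\,Y_\xi$, while $\ep\Delta\big((w^\pm)^m\big)$ expands into terms carrying $(w^\pm)^{m-2}Y_\xi^2$, $(w^\pm)^{m-1}Y_{\xi\xi}$ and $(w^\pm)^{m-1}Y_\xi$ times derivatives of $u_0$; using $m\ge 2$ (so $(w^\pm)^{m-2}\le M^{m-2}$ stays bounded even at the edge of $Supp\,u_0$, which is where the porous-medium degeneracy would otherwise strike), the bound $\|u_0\|_{C^2(\ombar)}<\infty$, and the standard estimates on $Y_\xi,Y_{\xi\xi}$ near the three equilibria, one checks that $w^-$ and $w^+$ are respectively sub- and super-solutions of $\Pe$ on $[0,t^\ep]$; hence $w^-\le\ue\le w^+$ there. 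Evaluating at $t=t^\ep$ and invoking the fact that $\tau\mapsto Y(\tau,\xi)$ leaves any neighbourhood of $a$ at exponential rate $\mu$ and then converges to $1$ if $\xi>a$ and to $0$ if $\xi<a$, I conclude: $\ue(x,t^\ep)\in[1-2\ep,1+\eta]$ whenever $d(x,0)\le-\CG\ep|\ln\ep|$, $\ue(x,t^\ep)\le\eta$ whenever $d(x,0)\ge\CG\ep|\ln\ep|$, and $0\le\ue(\cdot,t^\ep)\le 1+\eta$ everywhere.

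\textbf{Motion of interface.} I would next build, for $\ep$ small, sub- and super-solutions of the form
\[
u^\pm(x,t):=U_0\!\left(\frac{d(x,t)}{\ep}\mp p(t)\mp\CG|\ln\ep|\right)\pm\ep\,q(t),\qquad p(t)=e^{Lt}-1,\ \ q(t)=e^{Lt},
\]
with $L$ large (replacing $u^-$ by its positive part), and verify that $u^-$ (resp. $u^+$) is a sub- (resp. super-) solution of $\Pe$ on $[t^\ep,T]$: plugging in and using $(U_0^m)''+c^*(U_0)'+f(U_0)=0$, the $O(\eun)$ terms cancel thanks to $|\n d|=1$ and $d_t=-c^*$; the curvature term $(U_0)'\,\Delta d$, the error coming from $|\n d|\le 1$, and the $\ep q(t)$-shift are absorbed by taking $L$ large; and, crucially, the degeneracy of $\ep\Delta\big((u^\pm)^m\big)$ near the edge $z=z_0$ of the support of $U_0$ is controlled because $m\ge 2$ keeps $(U_0^m)''$ bounded there. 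The extra shift $\CG|\ln\ep|$ is chosen so that the generation estimates give $u^-(\cdot,t^\ep)\le\ue(\cdot,t^\ep)\le u^+(\cdot,t^\ep)$; the comparison principle then propagates this inequality to all $t\in[t^\ep,T]$. Reading off $u^\pm$ converts the sandwich into \eqref{resultat}: since $1-U_0(z)\sim Ke^{\lambda z}$, the distance to $\Gamma_t$ must exceed a fixed multiple of $\ep|\ln\ep|$ for $u^-$ to exceed $1-2\ep$, which determines $\C$; on the other side, $U_0\equiv 0$ on $[z_0,\infty)$ together with the finite-speed-of-propagation property of $\Pe$ confines $Supp\,\ue(\cdot,t)$, so that $\ue\le\eta$ outside $\mathcal N_{\epai}(\Gamma_t)$, where $\epai$ is dictated by the width of the zone on which $d(\cdot,t)$ is an honest distance function.

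\textbf{Main obstacle.} The delicate point is the motion-step construction near the free boundary: unlike in the Allen--Cahn problem, $U_0$ has compact support and $U_0^m$ vanishes only to finite order at $z_0$, so $\ep\Delta\big((u^\pm)^m\big)$ is only borderline controllable --- this is exactly what forces $m\ge 2$ --- and the outer edge of $Supp\,\ue$ cannot be pinned to $\Gamma_t$ more tightly than within a fixed tubular neighbourhood, whence the appearance of the arbitrary-but-fixed $\epai$ rather than an $O(\ep|\ln\ep|)$ layer on the $0$-side. A secondary difficulty is the matching at $t=t^\ep$: the crude output of the ODE comparison must be shown to lie between the travelling-wave barriers uniformly in $x$, which requires the generation estimates to hold not only in the equilibrium-dominated bulk but also, in a weaker form, throughout the whole $O(\ep|\ln\ep|)$ transition zone around $\Gamma_0$.
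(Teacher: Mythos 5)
Your generation step and the overall two-stage architecture match the paper (the paper simply quotes the generation estimate from \cite{A-Hil}, which is indeed where $m\geq 2$ is used --- not in the motion step, contrary to what you assert). The genuine gap is in the motion step. Your ansatz $u^\pm=U_0\bigl(d/\ep\mp p(t)\mp\C|\ln\ep|\bigr)\pm\ep q(t)$ is the additive Allen--Cahn perturbation of \cite{A-Hil-Mat}, and it fails for the porous medium operator precisely at the edge of the support of $U_0$. First, $\nabla\bigl((u^+)^m\bigr)$ is not continuous there: for $z<\omega$ one has $\partial_z (U_0+\ep q)^m=m(U_0+\ep q)^{m-1}U_0'$, which tends to $m(\ep q)^{m-1}U_0'(\omega)\neq 0$ (possibly $-\infty$), while it is $0$ for $z>\omega$; so condition (i) of Lemma \ref{lemma-sup-poreux} is violated and $u^+$ is not an admissible super-solution even for $m=2$. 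Second, boundedness of $(U_0^m)''$ does not control $\bigl((U_0\pm\ep q)^m\bigr)''$: the cross term $m(m-1)\bigl[(U_0\pm\ep q)^{m-2}-U_0^{m-2}\bigr](U_0')^2$ is of order $\ep^{2-m}$ where $U_0\sim\ep$, since $U_0'$ blows up like $(\omega-z)^{(2-m)/(m-1)}$; the vertical shift destroys the algebraic structure $\Delta\bigl(((1\pm\ep)\text{-type perturbation of }U_0)^m\bigr)$ that makes the travelling-wave cancellation work. This is not a technical detail one can absorb by taking $L$ large: it is the reason the paper abandons this ansatz entirely.

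The paper's actual constructions are different on the two sides, and this asymmetry is forced. For the sub-solution it uses the \emph{multiplicative} perturbation $u_\ep^-=(1-\ep)U(z_\ep^-)$, so that $(u_\ep^-)^m=(1-\ep)^mU^m$ and the diffusion term is exactly $(1-\ep)^m$ times the unperturbed one; the error is then the zeroth-order term $E_4=-f((1-\ep)U)+(1-\ep)f(U)$, which has the good sign near $U=1$ because $f'(1)<0$. For the super-solution no perturbation of $U$ works (one needs strict positivity), so the paper takes a strictly faster wave $(c,W)$ with $c=c^*+\rho$, modified \`a la Hosono so that $W^{m-1}\in C^1(\R)$, $W\equiv\delta^{1/(m-1)}\leq\eta$ on $[Z,\infty)$ and $W(-\infty)=+\infty$, composes it with the distance function $d^c$ to the \emph{faster} interface $\Gamma_t^c$, and verifies the super-solution inequality only on a sub-domain $\Sigma$ exterior to the layer, with the Dirichlet value $1+\eta$ on the inner lateral boundary handled by the a priori bound $\ue\leq1+\eta$. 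This also explains the fixed $\epai$-neighbourhood in the third line of \eqref{resultat}: the comparison interface moves at speed $c^*+\rho$ and drifts away from $\Gamma_t$ linearly in time, with $\rho$ chosen small in terms of $\epai$ via \eqref{d-rho} --- not, as you suggest, because of a finite-speed-of-propagation confinement of $Supp\,\ue$. As written, your motion step does not close, and repairing it requires these two distinct ideas rather than a refinement of your estimates.
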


\begin{rem}[About the thickness of the interface]Since the
construction of super-solutions is much more involved than that of
sub-solutions, the statement \eqref{resultat} is more accurate in
$\om _ t^{(1)}$ than in $\om _ t^{(0)}$. More precisely, on the
one hand, \eqref{resultat} shows that the convergence to 1 is
uniform \lq\lq inside the interface" except in $\mathcal O
(\ep|\ln\ep|)$ tubular neighborhoods of the sharp interface limit;
on the other hand, \eqref{resultat} only shows that the
convergence to 0 is uniform \lq\lq outside the interface" except
in $\mathcal O (1)$ tubular neighborhoods of the sharp interface
limit.
\end{rem}

\begin{rem}[About the assumption $m\geq 2$]Note that the sub- and super-solutions we shall construct to
study the motion of interface allow $m>1$. Nevertheless, since we
consider {\it not well-prepared initial data}, we need to quote a
generation of interface result from \cite{A-Hil} which is valid
only for $m\geq 2$ (if $1<m<2$ the partial differential equation
is not only degenerate but also singular). When initial data have
a \lq\lq suitable shape", the restriction $m\geq 2$ can be
removed.
\end{rem}

As a direct consequence of Theorem \ref{width}, we have the
following convergence result.

\begin{cor}[Convergence]\label{total}
Assume $m\geq 2$. As $\ep\to 0$, the solution $\ue$ converges to
$\tilde u$ in $\cup _{0<t\leq  T}(\om^{(i)}_t\times\{ t\})$, where
$i=0, 1$.
\end{cor}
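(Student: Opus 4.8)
The plan is to obtain the corollary as an immediate consequence of the uniform layer estimate \eqref{resultat}, by a compactness argument in space-time. I read ``convergence in $\cup_{0<t\leq T}(\om^{(i)}_t\times\{t\})$'' as locally uniform convergence, i.e.\ uniform convergence on every compact subset. So I would fix $i\in\{0,1\}$, fix an arbitrary compact set $K\subset\cup_{0<t\leq T}(\om^{(i)}_t\times\{t\})$, and aim to show $\sup_K|\ue-\tilde u|\to 0$ as $\ep\to 0$.

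The first step is geometric. Since $\Gamma=\cup_{0\leq t<T^{max}}(\Gamma_t\times\{t\})$ is a smooth closed hypersurface in $Q_T$ disjoint from $K$, the space-time distance $d:=\operatorname{dist}(K,\Gamma)$ is positive. I would use this to deduce that for every $(x,t)\in K$ one has $\operatorname{dist}(x,\Gamma_t)\geq d$ (by comparing $(x,t)$ with the slice $\Gamma_t\times\{t\}$), so that $x\in\om_t^{(i)}\setminus\mathcal N_d(\Gamma_t)$. Likewise, since the projection of $K$ onto the time axis is compact in $(0,T]$, there is $t_0>0$ with $t\geq t_0$ for all $(x,t)\in K$.

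The second step is to feed these bounds into Theorem \ref{width}. Given $\eta>0$ arbitrary, I would invoke the theorem with this $\eta$ and with $\epai$ chosen smaller than $d$ (and smaller than whatever smallness threshold the theorem requires), obtaining constants $\ep_0$ and $\C$. Because $t^\ep=\mu^{-1}\ep|\ln\ep|\to 0$ and $\C\ep|\ln\ep|\to 0$, for all sufficiently small $\ep$ one has $t^\ep<t_0$ and $\C\ep|\ln\ep|<d$; then every $(x,t)\in K$ satisfies the hypotheses of \eqref{resultat} in the relevant region, which yields $\ue(x,t)\in[1-2\ep,1+\eta]$ when $i=1$ and $\ue(x,t)\in[0,\eta]$ when $i=0$. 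Since $\tilde u\equiv 1$ on $\om_t^{(1)}$ and $\tilde u\equiv 0$ on $\om_t^{(0)}$, this gives $\sup_K|\ue-\tilde u|\leq\max(2\ep,\eta)$; letting $\ep\to 0$ and then $\eta\to 0$ finishes the argument.

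There is no serious obstacle here: the corollary is essentially a formal consequence of Theorem \ref{width}. The only point requiring a little care is the passage in the first step from the single number $d=\operatorname{dist}(K,\Gamma)$ (a space-time distance) to the two uniform facts actually used, namely the slicewise lower bound $\operatorname{dist}(x,\Gamma_t)\geq d$ and the uniform positive lower bound $t\geq t_0$; both follow from compactness of $K$, closedness of $\Gamma$, and the fact that $K$ avoids the slice $t=0$.
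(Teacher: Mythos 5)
Your argument is correct and is exactly the intended deduction: the paper offers no separate proof, simply asserting that the corollary is a direct consequence of Theorem \ref{width}, and your compactness argument (slicewise distance bound from the space-time distance, $t\geq t_0>0$, then $\ep\to 0$ followed by $\eta\to 0$) is the standard way to make that assertion precise. The only interpretive choice is reading the unspecified mode of convergence as locally uniform convergence on compact subsets, which is consistent with, and indeed the strongest conclusion supported by, the estimate \eqref{resultat}.
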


\vskip 8 pt For the relevance of nonlinear diffusion in population
dynamics models, we refer the reader to Gurney and Nisbet
\cite{GN}, Gurtin and Mac Camy \cite{GM}: density dependent
diffusion is efficient to study the dynamics of a population which
regulates its size below the carrying capacity set by the supply
of nutrients. Since density dependent equations degenerate at
points where $u=0$, a loss of regularity of solutions occurs and
their support propagates at finite speed.

Let us mention some earlier works on problems involving nonlinear
diffusion that are related to ours. Feireisl \cite{F} has studied
the singular limit of $\Pe$ in the whole space $\R ^N$, which
allows to reduce the issue to the radially symmetric case.
Hilhorst, Kersner, Logak and Mimura \cite{HKLM} have investigated
the singular limit of the equation posed in a bounded domain of
$\R ^N$, with $f(u)$ of the Fisher-KPP type. Note that the authors
in \cite{HKLM} assume the convexity of $\om _0 ^{(1)}$ which
allows them to construct a single super-solution for both the
generation and the motion of interface. Here, we dot not make such
a geometric assumption.

\vskip 8 pt The organization of this work is as follows. In
Section \ref{s:preliminaries}, we briefly recall known results
concerning the well-posedness of Problem $\Pe$. Section
\ref{s:motion} is the body of the paper: we construct sub- and
super-solutions to study the motion of the transition layers.
Finally, we prove Theorem \ref{width} in Section \ref{s:proof}.

\section{Comparison principle,
well-posedness}\label{s:preliminaries}

Since the diffusion term degenerates when $u=0$ a loss of
regularity of solutions occurs. We define below a notion of weak
solution for Problem $\Pe$, which is very similar to the one
proposed by Aronson, Crandall and Peletier \cite{ACP} for the one
dimensional problem with homogeneous Dirichlet boundary
conditions. Concerning the initial data, we suppose here that $u_0
\in L^\infty (\om)$ and $u_0 \geq 0$ a.e. Note that in this
subsection, and only in this subsection, we assume, without loss
of generality, that $\ep=1$, which yields the Problem
\[
 (P) \quad\begin{cases}
 u_t=\Delta (u^m)+f(u) &\text{in }\om \times (0,\infty)\vspace{3pt}\\
 \di \frac{\partial (u^m)}{\partial \nu} = 0 &\text{on }\partial \om \times (0,\infty)\vspace{3pt}\\
 u(x,0)=u_0(x) &\text{in }\om\,.
 \end{cases}
\]

\begin{defi}\label{definition-weaksol-poreux}
A function $u:[0,\infty)\to L^1(\om)$ is a solution of Problem
$(P)$ if, for all $T>0$,
\begin{enumerate}
\item $u \in C\left([0,\infty);L^1(\om)\right)\cap L^\infty
(\Q)\,;$
  \item for all $\varphi \in C^2(\overline{\Q})$ such that $\varphi \geq
  0$ and $\di \frac {\partial \varphi}{\partial \nu}=0$ on $\partial
  \om$, it holds that
\begin{equation}\label{deqexi-poreux}
\int_ \om u(T)\varphi(T)-\int\int_{\Q}(u\varphi _t+u^m\Delta
\varphi)=\int _\om u_0\varphi(0)+\int\int _{\Q} f(u)\varphi\,.
\end{equation}
\end{enumerate}
A sub-solution (respectively a super-solution) of Problem $(P)$ is
a function satisfying (i) and (ii) with equality replaced by
$\leq$ (respectively $\geq$).
\end{defi}

\begin{thm}[Existence and
comparison principle]\label{Existence-comparison}The following
properties hold.
\begin{enumerate}
 \item Let $u^-$ and $u^+$ be a sub-solution and a super-solution
of Problem $(P^\varepsilon)$ with initial data $u_0^-$ and $u_0^+$
respectively.
$$
\mbox{If~}\quad u_0^- \leq u_0^+\,\quad{a.e.}\quad
\mbox{~then~}\quad u^-\leq u^+ \mbox{~in~} Q_T\,;
$$
 \item Problem $(P)$ has a unique solution $u$ on
$[0,\infty)$ and
\begin{equation}\label{encadrement}
0\leq u \leq \max(1,\Vert u_0 \Vert
_{L^\infty(\om)})\quad\mbox{~in~} Q_T\,;
\end{equation}
\item $u\in C(\overline{Q_T})$.
\end{enumerate}
\end{thm}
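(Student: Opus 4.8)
\emph{Plan.} The plan is to prove the three assertions in the order: the comparison principle (i), then uniqueness in (ii), then existence in (ii), and finally the bound \eqref{encadrement} and the continuity (iii). The unifying observation is that $f$ is globally Lipschitz on any bounded interval, so the equation is a Lipschitz perturbation of the pure porous medium operator $u\mapsto -\Delta(u^m)$ and the reaction term will only ever produce harmless Gronwall factors $e^{\|f'\|_\infty t}$. The only genuine difficulty, which I flag now, is the degeneracy of $u\mapsto u^m$ at $u=0$: it is what forbids an elementary comparison argument and what makes the $C(\overline{\Q})$ regularity a nontrivial fact to be imported.

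\emph{Comparison principle.} Let $u^-$, $u^+$ be a sub- and a super-solution with $u_0^-\le u_0^+$, put $w:=u^--u^+$ and $L:=\|f'\|_{L^\infty}$. Subtracting the two inequalities of the form \eqref{deqexi-poreux} and writing $(u^-)^m-(u^+)^m=a\,w$ with $0\le a\le C$ ($a:=0$ where $u^-=u^+$, $C$ depending on the common $L^\infty$ bound) and $f(u^-)-f(u^+)=b\,w$ with $|b|\le L$, one gets, for every admissible test function $\varphi\ge 0$,
\begin{equation*}
\int_\om w(T)\varphi(T)-\int_\om w(0)\varphi(0)\ \le\ \iint_{\Q}w\,\bigl(\varphi_t+a\Delta\varphi+b\varphi\bigr).
\end{equation*}
Given $\chi\in C^2(\ombar)$ with $\chi\ge 0$ and $\partial_\nu\chi=0$, I would take $\varphi=\varphi_\delta$ the smooth solution of the backward uniformly parabolic problem $\varphi_t+a_\delta\Delta\varphi+b_\delta\varphi=0$, $\partial_\nu\varphi=0$, $\varphi(\cdot,T)=\chi$, where $a_\delta,b_\delta$ are smooth with $\delta\le a_\delta\le C+\delta$, $|b_\delta|\le L$, and $a_\delta\to a$, $b_\delta\to b$ a.e.; the maximum principle gives $0\le\varphi_\delta\le e^{LT}\|\chi\|_\infty$ uniformly in $\delta$. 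Inserting $\varphi_\delta$ leaves the error terms $\iint_{\Q}w(a-a_\delta)\Delta\varphi_\delta$ and $\iint_{\Q}w(b-b_\delta)\varphi_\delta$; the second tends to $0$ by dominated convergence, and the first is bounded via Cauchy--Schwarz by $\|w\|_\infty\,C\sqrt{\delta}\,\bigl(\iint_{\Q}a_\delta|\Delta\varphi_\delta|^2\bigr)^{1/2}$, which tends to $0$ provided one has the weighted second-order estimate $\iint_{\Q}a_\delta|\Delta\varphi_\delta|^2\le C'$ uniformly in $\delta$ --- this estimate (multiply the dual equation by $\Delta\varphi_\delta$ and integrate) is the one technical point, proved exactly as in \cite{ACP}, where the argument is carried out in one space dimension; it adapts to $\R^N$ with homogeneous Neumann data without change. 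Letting $\delta\to 0$ yields $\int_\om w(T)\chi\le 0$ for every such $\chi$ (using $w(0)\le 0$ and $\varphi_\delta(0)\ge 0$), hence $w(T)\le 0$, and since $T$ is arbitrary, $u^-\le u^+$ in $\Q$.

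\emph{Existence, uniqueness, and \eqref{encadrement}.} Uniqueness is then immediate, applying the comparison principle to two solutions in both orders. For existence I would use a non-degenerate approximation: replace $u^m$ by a smooth $\beta_\delta(u)$ with $\beta_\delta(0)=0$, $\beta_\delta'\ge\delta$, $\beta_\delta(s)\to s^{m}$ locally uniformly, and $u_0$ by a smooth $u_0^\delta\to u_0$ in $L^1(\om)$ with $0\le u_0^\delta\le\max(1,\|u_0\|_\infty)=:K$. The approximate problem is uniformly parabolic, hence has a unique global classical solution $u_\delta$ by quasilinear parabolic theory. Since $f(0)=0$ and $f\le 0$ on $[1,\infty)$ (the bistable structure forces $f$ to have no zero above $1$, with $f'(1)<0$), the constants $0$ and $K$ are a sub- and a super-solution of the approximate problem, so $0\le u_\delta\le K$ uniformly in $\delta$; multiplying the approximate equation by $\beta_\delta(u_\delta)$ and integrating bounds $\nabla\beta_\delta(u_\delta)$ in $L^2(\Q)$, and a standard bound on the time derivative together with the Aubin--Lions lemma give, along a subsequence, $\beta_\delta(u_\delta)\to z$ in $L^2(\Q)$ and a.e., whence $u_\delta\to u:=z^{1/m}$ a.e. and in every $L^p(\Q)$, $p<\infty$. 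Passing to the limit in \eqref{deqexi-poreux} is then routine (the only nonlinear terms $u_\delta^m\Delta\varphi$ and $f(u_\delta)\varphi$ converge by dominated convergence), equicontinuity in time gives $u\in C([0,\infty);L^1(\om))$, and the bound $0\le u\le K$ is precisely \eqref{encadrement}.

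\emph{Regularity and the main obstacle.} Finally, $u\in C(\overline{\Q})$ would follow from the uniform interior and lateral-boundary moduli of continuity known for solutions of degenerate parabolic equations of porous medium type (intrinsic scaling \`a la DiBenedetto, the bounded term $f(u)$ being a harmless lower-order perturbation and the homogeneous Neumann condition admitting boundary versions of these estimates): these hold uniformly along the approximating sequence $u_\delta$ and pass to the limit. I expect the main obstacle to be precisely the degeneracy of $u^m$ at $u=0$: it forces, in the comparison argument, the regularization of the dual problem together with the uniform weighted estimate $\iint a_\delta|\Delta\varphi_\delta|^2\le C'$, and it makes the $C(\overline{\Q})$ statement something one must quote from the regularity theory of the porous medium equation rather than derive by elementary means.
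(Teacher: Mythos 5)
Your proposal is correct and follows exactly the route the paper itself takes: the paper gives no proof but defers to Aronson--Crandall--Peletier \cite{ACP} for existence, uniqueness and comparison (the duality argument with the regularized backward dual problem and the weighted estimate $\iint a_\delta|\Delta\varphi_\delta|^2\le C'$, plus non-degenerate approximation), and to DiBenedetto \cite{DB} for the continuity in $\overline{Q_T}$. Your sketch is an accurate reconstruction of those cited arguments, including the correct identification of the degeneracy at $u=0$ as the only real obstacle.
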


The proof of the theorem above can be performed in the same lines
as in \cite[Theorem 5]{ACP} (see also \cite{Kal} and \cite{BKP}
 for related results). The continuity of $u^\varepsilon$
follows from \cite{DB}.

The following result turns out to be an essential tool when
constructing smooth sub- and super-solutions of Problem $\Pe$.

\begin{lem}\label{lemma-sup-poreux}
Let $u$ be a continuous nonnegative function in $\ombar \times
[0,T]$. Define $\support=\{x\in\om:\, u(x,t)>0\}$ and
$\gammasupport=\partial \support$ for all $t\in[0,T]$. Suppose the
family $\Gamma:=\cup _{0< t \leq T} \gammasupport  \times \{t\}$
is sufficiently smooth and let $\nusupport$ be the outward normal
vector on $\gammasupport$. Suppose moreover that
\begin{enumerate}
\item $\n (u^m)  \text{ is continuous in }\ \ombar \times [0,T]$
\item ${\cal L}[u]:=u_t-\Delta (u^m)-f(u)=0\; \text{ in }\
\{(x,t)\in \ombar \times [0,T]:\, u(x,t)>0\}$ \item
$\di{\frac{\partial (u^m)}{\partial \nusupport}}=0\; \text{ on }\
\partial \support,\; \text{ for all }\ t\in[0,T]$\,.
\end{enumerate}
Then $u$ is a solution of Problem $(P)$. Similarly a sub-solution
(respectively a super-solution) of Problem $(P)$ is a function
satisfying (i) and (ii)---(iii) with equality replaced by $\leq$
(respectively $\geq$).
\end{lem}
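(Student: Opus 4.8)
The plan is to verify directly that such a function $u$ satisfies the weak formulation \eqref{deqexi-poreux} of Definition \ref{definition-weaksol-poreux}. The regularity requirement (i) of that definition, namely $u \in C([0,\infty);L^1(\om)) \cap L^\infty(\Q)$, is immediate from the assumed continuity of $u$ on $\ombar \times [0,T]$ (together with boundedness of $\ombar$), so the entire work is in establishing the integral identity (ii). First I would fix a test function $\varphi \in C^2(\overline{\Q})$ with $\varphi \geq 0$ and $\partial \varphi / \partial \nu = 0$ on $\partial \om$, and rewrite the left-hand side of \eqref{deqexi-poreux}. The natural idea is to integrate by parts in time to produce $-\int\int_\Q u_t \varphi$ plus boundary-in-time terms, and integrate by parts twice in space to turn $\int\int_\Q u^m \Delta \varphi$ into $\int\int_\Q \Delta(u^m)\,\varphi$ plus boundary terms on $\partial\om$; then the bulk contribution becomes $\int\int_\Q (u_t - \Delta(u^m))\varphi$, which by hypothesis (ii) of the lemma equals $\int\int_\Q f(u)\varphi$ on the open set $\{u>0\}$, while on $\{u=0\}$ both $u$ and $u^m$ vanish identically so the integrand is $f(0)\cdot 0 = 0$ there as well --- matching the right-hand side of \eqref{deqexi-poreux}.

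The subtlety, and the step I expect to be the main obstacle, is that $u^m$ is in general only $C^1$ across the free boundary $\gammasupport$ (its gradient is continuous by hypothesis (i) of the lemma, but $\Delta(u^m)$ need not extend continuously), so one cannot integrate by parts naively over all of $\om$. The remedy is to split $\om$, at each time, into the open set $\support = \{u(\cdot,t)>0\}$ and its complement, apply the divergence theorem on $\support$ only --- where $u$ is a classical solution and $u^m$ is smooth enough --- and collect the resulting boundary integrals over $\gammasupport$. Each application of Green's identity on $\support$ produces a term involving either $u^m \,\partial \varphi / \partial \nusupport$ or $\varphi \, \partial(u^m)/\partial \nusupport$ on $\gammasupport$. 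The first of these vanishes because $u^m = 0$ on $\gammasupport$ (the boundary of the support of a continuous function), and the second vanishes precisely by hypothesis (iii) of the lemma, $\partial(u^m)/\partial \nusupport = 0$ on $\gammasupport$. On the portion of $\partial \support$ that coincides with $\partial \om$, the term $u^m\, \partial\varphi/\partial\nu$ vanishes by the Neumann condition on $\varphi$, and $\varphi\,\partial(u^m)/\partial\nu$ vanishes again by (iii). The smoothness assumed for $\Gamma = \cup_{0<t\le T}\gammasupport\times\{t\}$ is what makes these boundary manipulations --- and an accompanying Reynolds-type transport computation when integrating by parts in $t$ over the time-dependent domain $\support$ --- legitimate.

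Concretely, I would carry out the computation in the following order: (1) record that (i) of the definition is automatic; (2) on the moving domain $\bigcup_t \support \times\{t\}$, write $\int\int u_t\varphi = \int_\om u(T)\varphi(T) - \int_\om u_0\varphi(0) - \int\int u\varphi_t$ plus a flux term through $\gammasupport$ that vanishes because $u=0$ there, using the smoothness of $\Gamma$; (3) on $\support$ at fixed $t$, apply Green's identity twice to $\int_\support u^m\Delta\varphi$, producing $\int_\support \Delta(u^m)\varphi$ and the two boundary integrals over $\gammasupport \cup (\partial\om\cap\partial\support)$ discussed above, all of which vanish by (iii), by $u^m|_{\gammasupport}=0$, and by $\partial\varphi/\partial\nu|_{\partial\om}=0$; (4) combine (2) and (3) and invoke hypothesis (ii) of the lemma, $u_t - \Delta(u^m) = f(u)$ on $\{u>0\}$, noting that extending all integrals from $\support$ to $\om$ costs nothing since every integrand contains a factor vanishing on $\{u=0\}$; this yields exactly \eqref{deqexi-poreux}. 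Finally, for the sub-/super-solution statement one repeats the argument with $\le$ (resp.\ $\ge$) in hypothesis (ii) and in (iii); since $\varphi \ge 0$ each inequality is preserved through the integrations by parts --- the only point to check is that the now-possibly-nonzero flux term $\int \varphi\,\partial(u^m)/\partial\nusupport$ has the correct sign, which it does because $\varphi\ge 0$ and, $u^m$ being $\ge 0$ and vanishing on $\gammasupport$, its outward normal derivative there is $\le 0$, consistent with the direction of the inequality.
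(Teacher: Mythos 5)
The paper never proves this lemma itself---it simply cites \cite{HKLM}---and your plan is precisely the standard verification used there: check (i) of Definition \ref{definition-weaksol-poreux} trivially, then split the integrals at the free boundary, apply Green's identity and a transport argument on the moving support, and kill the boundary terms using $u^m=0$ on $\gammasupport$, hypothesis (iii), and the Neumann condition on $\varphi$, so the proposal is essentially correct and follows the same route. Two points to tighten: the double integration by parts on $\support$ should be justified by working on inner approximations of the positivity set and passing to the limit (it is the assumed continuity of $\n(u^m)$ that makes the flux terms converge, since $\Delta(u^m)$ and $u_t$ need not stay bounded up to $\gammasupport$ --- recall $U'(\omega)$ may be $-\infty$); and your closing sign remark (that the outward normal derivative of $u^m$ on $\gammasupport$ is automatically $\leq 0$) only helps in the sub-solution case, whereas for super-solutions one must simply invoke (iii) with $\geq$, which is exactly what gives the flux term the favorable sign.
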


The proof of this result can be found in \cite{HKLM}.

\section{Motion of the transition layers}\label{s:motion}

\subsection{Materials}\label{ss:materials}

\noindent{\bf Underlying travelling waves.} Hosono \cite{Hos} has
investigated travelling wave solutions for the degenerate one
dimensional equation
$$u_t=(u^m)_{xx}+f(u)\,.$$
 He proved that there exists a unique
travelling wave $(c^*,U)$, that the sign of the velocity $c^*$ is
that of $\int _ 0 ^1 u^{m-1}f(u)\,du$, and that the profiles vary
with the sign of the velocity. More precisely, for $c^*<0$, the
front is smooth and $U\in C^\infty(\R)$, whereas, for $c^*>0$, we
only have $({U}^{m-1})'\in L^\infty (\R)$, but $({U}^{m-1})'\notin
C(\R)$. These different behaviors of the travelling waves are in
contrast with the density independent diffusion models, where
fronts are smooth whatever their velocities are (see \cite{FM}).

In the present paper, the assumption \eqref{positive-speed}
implies that $c^*>0$. More precisely the following holds (see
\cite{Hos} for details). The travelling wave $(c^*,U)$ is the
solution of the auxiliary problem
\begin{equation}\label{eq-phi-poreux-probleme}
\left\{\begin{array}{ll}
({U}^m) '' +c^*U'+f(U)=0 &\text{ on } (-\infty,\omega) \vspace{5pt}\\
U(-\infty)= 1\vspace{5pt}\\
 U(0)=a\vspace{5pt}\\
 {U}'<0 &\text{ on } (-\infty,\omega)\vspace{5pt}\\
({U}^m)'(\omega)=0\vspace{5pt}\\
 U\equiv 0 &\text{ on
}[\omega,\infty)\,,
\end{array} \right.
\end{equation}
for some $\omega >0$. As $z\to -\infty$, terms are exponentially
decaying:
\begin{equation}\label{est-phi-U}
\max\left(1-U(z),|{U}'(z)|,|{U}''(z)|\right) \leq Ce^{-\lambda
|z|}\quad \text{ for } z\leq 0\,,
\end{equation}
for some positive constants $C$ and $\lambda$. As
$z\nearrow\omega$, we have
\begin{equation}\label{derivee-en-omega} \lim _{z
\nearrow\omega} \left({U}^{m-1}\right)'(z)=-\frac{m-1}m c^*\quad
\text{ and }\quad \lim _{z\nearrow
\omega}({U}^{m-1})''(z)=-\frac{(m-1)^2}{m^2}f'(0)\,,
\end{equation}
and $U'(\omega)\in[-\infty,0)$. Moreover, for a positive constant
which we denote again by $C$, there holds
\begin{equation}\label{estimate}
|(U^m)''(z)|\leq C |U'(z)|=-CU'(z)\quad\text{ for all }
z\in(-\infty,\omega)\,.
\end{equation}

\vskip 8pt \noindent{\bf The cut-off signed distance function.}
Another classical ingredient in similar situations (see \cite{Ser}
or \cite{Gil-Tru}) is a {\it cut-off signed distance function $d$}
which we now define. Let $\widetilde d(\cdot,t)$ be the signed
distance function to $\Gamma _t$, namely
\begin{equation}\label{eq:dist}
\widetilde d (x,t):=
\begin{cases}
-&\hspace{-10pt}{\rm dist}(x,\Gamma _t)\quad\text{ for }x\in\regionun \\
&\hspace{-10pt} {\rm dist}(x,\Gamma _t) \quad \text{ for }
x\in\regionzero\,,
\end{cases}
\end{equation}
where ${\rm dist}(x,\Gamma _t)$ is the distance from $x$ to the
hypersurface  $\Gamma  _t$. We remark that $\widetilde
d(\cdot,t)=0$ on $\Gamma _t$ and that $|\nabla \widetilde d|=1$ in
a neighborhood of the interface, say $|\n \widetilde d (x,t)|=1$
if $|\widetilde d (x,t)|<2d_0$, for some $d_0 >0$. By reducing
$d_0$ if necessary we can assume that $\widetilde d$ is smooth in
$ \{(x,t) \in \ombar \times [0,T] :\,|\widetilde{d}(x,t)|<3
 d_0\}$ and that
\begin{equation}\label{front}
 dist(\Gamma_t,\partial \Omega)\geq 3d_0 \quad \textrm{ for all }
 t\in[0,T]\,.
\end{equation}

 Next, let $\zeta(s)$ be a smooth increasing function on $\R$ such
that
\[
 \zeta(s)= \left\{\begin{array}{ll}
 s &\textrm{ if }\ |s| \leq d_0\vspace{4pt}\\
 -2d_0 &\textrm{ if } \ s \leq -2d_0\vspace{4pt}\\
 2d_0 &\textrm{ if } \ s \geq 2d_0\,.
 \end{array}\right.
\]
We then define the cut-off signed distance function $d$ by
\begin{equation}
d(x,t):=\zeta\left(\tilde{d}(x,t)\right)\,.
\end{equation}

Note that
\begin{equation}\label{norme-un}
\text{ if } \quad |d(x,t)|< d_0 \quad \text{ then }\quad |\nabla
d(x,t)|=1\,,
\end{equation}
that $d$ is constant ($=2d_0$) in a neighborhood of $\partial
\Omega$, and that the equation of motion $\Pz$ yields
\begin{equation}\label{interface}
 \text{ if } \quad |d(x,t)|< d_0 \quad \text{ then }\quad  d_t (x,t)+c^*=0\,.
\end{equation}
Moreover, there exists a constant $C>0$ such that
\begin{equation}\label{est-dist}
|\nabla d (x,t)|+|\Delta d (x,t)|\leq C\quad \textrm{ for all }
(x,t) \in \overline {\Q}\,.
\end{equation}

\subsection{Construction of sub-solutions}\label{ss:sub}

Equipped with the travelling wave $(c^*,U)$ and the signed
distance function $d$, we are looking for sub-solutions in the
form
\begin{equation}\label{sub-sol}
u^- _\ep (x,t):=(1-\ep)U\left(\frac{d(x,t)+\ep|\ln \ep|p\,
e^{t}}\ep\right)=(1-\ep)U(z^-_\ep(x,t))\,,
\end{equation}
where
\begin{equation}\label{def:z-moins}
z^- _\ep (x,t):=\di \frac{d(x,t)+\ep|\ln \ep|p\, e^{t}}\ep\,.
\end{equation}

\begin{lem}[Sub-solutions]\label{lem:sub}
Let $p>0$ be arbitrary. Then, for $\ep>0$ small enough, $u ^-
_\ep$ is a sub-solution for Problem $\Pe$.
\end{lem}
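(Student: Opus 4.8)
The plan is to invoke the obvious $\ep$-dependent version of Lemma~\ref{lemma-sup-poreux}, i.e. with the operator $\mathcal L^\ep[u]:=u_t-\ep\Delta(u^m)-\eun f(u)$ (the proof in \cite{HKLM} applies with the $\ep$'s carried along): one must check that $u^-_\ep$ is continuous and nonnegative on $\ombar\times[0,T]$, that $\n\big((u^-_\ep)^m\big)$ is continuous there, that $\mathcal L^\ep[u^-_\ep]\le0$ on the open set $\{u^-_\ep>0\}$, that $\frac{\partial(u^-_\ep)^m}{\partial\nu}=0$ on $\partial\om$, and that $\frac{\partial(u^-_\ep)^m}{\partial\nusupport}\le0$ on the free boundary $\gammasupport$. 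The conditions not involving $\mathcal L^\ep$ are dispatched quickly. Since $U$ is continuous with $0\le U\le1$ and $d$ is smooth, $u^-_\ep$ is continuous and nonnegative; since $(U^m)'=\tfrac m{m-1}\,U\,(U^{m-1})'$ is continuous on $\R$ --- it is smooth on $(-\infty,\omega)$ and, by \eqref{derivee-en-omega}, tends to $0=(U^m)'(\omega)$ as $z\nearrow\omega$ --- the identity $\n\big((u^-_\ep)^m\big)=(1-\ep)^m(U^m)'(z^-_\ep)\,\n d/\ep$ shows that this gradient is continuous and, as $(U^m)'(\omega)=0$, that it vanishes on $\gammasupport=\{z^-_\ep=\omega\}$. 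Near $\partial\om$ one has $d\equiv2d_0$ by \eqref{front}, whence $z^-_\ep\ge2d_0/\ep>\omega$ and $u^-_\ep\equiv0$ for $\ep$ small, which gives the Neumann condition; and $\gammasupport=\{d=\ep\omega-\ep|\ln\ep|p\,e^t\}$ is, for $\ep$ small, a level set of $d$ contained in $\{|d|<d_0\}$, where $|\n d|=1\ne0$, so (implicit function theorem) the family $\cup_{0<t\le T}\gammasupport\times\{t\}$ is smooth.

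The heart of the matter is the differential inequality. Writing $z=z^-_\ep(x,t)$, the chain rule yields, after multiplication by $\ep$,
\begin{equation}\label{eq:sub-L}
\ep\,\mathcal L^\ep[u^-_\ep]=(1-\ep)U'(z)\big(d_t+\ep|\ln\ep|p\,e^t\big)-(1-\ep)^m\big[(U^m)''(z)\,|\n d|^2+\ep(U^m)'(z)\,\Delta d\big]-f\big((1-\ep)U(z)\big).
\end{equation}
Observe first that $\{u^-_\ep>0\}=\{z<\omega\}=\{d<\ep\omega-\ep|\ln\ep|p\,e^t\}\subset\{d<0\}$ for $\ep$ small, and that $U'<0$ on $(-\infty,\omega)$. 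Fix a large constant $z_2<0$, chosen below independently of $\ep$. On $\{u^-_\ep>0\}\cap\{z\ge z_2\}$ one has $|d|<d_0$ for $\ep$ small, hence $|\n d|=1$ and $d_t=-c^*$ by \eqref{norme-un}--\eqref{interface}; substituting $(U^m)''(z)=-c^*U'(z)-f(U(z))$ from \eqref{eq-phi-poreux-probleme}, the right-hand side of \eqref{eq:sub-L} rewrites as
\[
(1-\ep)U'(z)\Big[\ep|\ln\ep|p\,e^t+c^*\big((1-\ep)^{m-1}-1\big)\Big]+\Big[(1-\ep)^mf(U(z))-f\big((1-\ep)U(z)\big)\Big]-\ep(1-\ep)^m(U^m)'(z)\Delta d.
\]
For $\ep$ small the bracket multiplying $(1-\ep)U'(z)$ is \emph{positive} and of order $\ep|\ln\ep|$ (as $c^*>0$ by \eqref{positive-speed} and $1-(1-\ep)^{m-1}=O(\ep)$), so, using $|U'(z)|\ge\delta_0:=\inf_{[z_2,\omega)}|U'|>0$ on this zone (a positive number since $U'<0$ is continuous on $(-\infty,\omega)$ and $U'(\omega^-)\in[-\infty,0)$), that term is $\le-\tfrac14\delta_0 p\,\ep|\ln\ep|$. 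A first-order Taylor expansion of $f$, together with \eqref{estimate} --- which, integrated from $\omega$, gives $|(U^m)'(z)|\le C\,U(z)\le C$ --- and \eqref{est-dist}, bounds the remaining two terms by $C\ep$. Hence $\ep\,\mathcal L^\ep[u^-_\ep]\le\ep\big(C-\tfrac14\delta_0 p|\ln\ep|\big)<0$ for $\ep$ small.

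It remains to handle the ``back'' of the wave, i.e. the two regions where $z\to-\infty$. On $\{u^-_\ep>0\}\cap\{z\le z_2\}$ (still inside $\{|d|<d_0\}$ for $\ep$ small) we again substitute $(U^m)''(z)=-c^*U'(z)-f(U(z))$: the resulting term $(1-\ep)U'(z)\big[\ep|\ln\ep|p\,e^t+c^*((1-\ep)^{m-1}-1)\big]$ is $\le0$ (the bracket being positive for $\ep$ small, $U'<0$), while \eqref{est-phi-U} gives $1-U(z),\,|U'(z)|,\,|(U^m)'(z)|\le Ce^{-\lambda|z_2|}$, and a Taylor expansion of the reaction difference about $U=1$ (recall $f(1)=0$) gives $(1-\ep)^mf(U(z))-f\big((1-\ep)U(z)\big)=f'(1)\ep+O\big(\ep\,e^{-\lambda|z_2|}+\ep^2\big)$ while the $\Delta d$ term is $O(\ep\,e^{-\lambda|z_2|})$; since $f'(1)<0$ by \eqref{der-f-poreux}, choosing first $|z_2|$ large and then $\ep$ small gives $\ep\,\mathcal L^\ep[u^-_\ep]\le f'(1)\ep+O(\ep\,e^{-\lambda|z_2|}+\ep^2)\le\tfrac12 f'(1)\ep<0$. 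On $\{u^-_\ep>0\}\cap\{d\le-d_0\}$ no cancellation is needed: there $z\le(-d_0+\ep|\ln\ep|p\,e^T)/\ep\le-d_0/(2\ep)$ for $\ep$ small, so by \eqref{est-phi-U} and \eqref{estimate} every term of \eqref{eq:sub-L} carrying a derivative of $U$ is $O\big(e^{-\lambda d_0/(2\ep)}\big)=o(\ep)$ (using the boundedness of $\n d$, $\Delta d$ and $d_t$), while $-f\big((1-\ep)U(z)\big)=f'(1)\ep+o(\ep)$; hence again $\ep\,\mathcal L^\ep[u^-_\ep]\le\tfrac12 f'(1)\ep<0$ for $\ep$ small. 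All hypotheses of Lemma~\ref{lemma-sup-poreux} being met, $u^-_\ep$ is a sub-solution of $\Pe$.

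The main obstacle is precisely this last step. At the back of the wave the favourable $O(\ep|\ln\ep|)$ term of \eqref{eq:sub-L} degenerates, because $U'(z)\to0$ there, so one cannot use it and must instead extract the negative quantity $-f\big((1-\ep)U(z)\big)\approx f'(1)\ep<0$ from the reaction term --- this is exactly where the stability of the equilibrium $u=1$, i.e. $f'(1)<0$, is invoked --- and then verify that the remaining $O(\ep)$ and exponentially small errors do not overturn this small gain, uniformly in $\ep$. The role of the fixed constant $z_2$ is to isolate this delicate tail from the bulk zone, where $|U'|$ is bounded below and the $\ep|\ln\ep|$ perturbation dominates. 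We note that only $m>1$ is used here, via $1-(1-\ep)^{m-1}=O(\ep)$, via \eqref{estimate}, and via the continuity of $(U^m)'$ at $\omega$ (the restriction $m\ge2$ in Theorem~\ref{width} comes only from the generation-of-interface step, not from this lemma).
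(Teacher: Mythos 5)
Your proof is correct and follows essentially the same route as the paper's: the same reduction via Lemma \ref{lemma-sup-poreux}, the same substitution of the travelling-wave ODE into $\ep\,\mathcal L^\ep[u^-_\ep]$, and the same three-zone case analysis (a front zone where $|U'|$ is bounded below and the $p\,\ep|\ln\ep|\,U'(z)$ term beats the $O(\ep)$ errors, an intermediate back zone, and the far zone $d\le -d_0$ where all derivative terms are exponentially small and $f'(1)<0$ makes the reaction contribution $\le \tfrac12 f'(1)\ep<0$). The only differences are cosmetic: in the intermediate zone the paper keeps the favourable $p\,\ep|\ln\ep|\,U'(z)$ term to absorb the $O(\ep|U'(z)|)$ errors and only needs the reaction difference to be nonpositive, whereas you discard that term and instead extract $f'(1)\ep<0$ after first fixing $|z_2|$ large (both work); also your parenthetical assertion that $\{z\le z_2\}\cap\{u^-_\ep>0\}$ lies inside $\{|d|<d_0\}$ is not literally true, but this is harmless since you give a separate, valid argument on $\{d\le -d_0\}$.
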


\begin{proof} In this proof (and only in this proof) we set $u^-_\ep=u$ and
$z^-_\ep=z$. Note that
\begin{equation}\label{machin}
\support=\{x\in \Omega:\, d(x,t)<-\ep|\ln \ep|p\,
e^{t}+\ep\omega\}\,,
\end{equation}
 where
$\support$ is defined as in Lemma \ref{lemma-sup-poreux}. It
follows that $u\equiv 0$ near the boundary $\partial \Omega$ so
that the Neumann boundary condition $(iii)$ in Lemma
\ref{lemma-sup-poreux} is fulfilled. Since $(U^m)'(\omega)=0$ we
see that $\nabla (u^m)$ is continuous in $\ombar \times [0,T]$.
Therefore, by virtue of Lemma \ref{lemma-sup-poreux}, it is enough
to prove that
$$
\ep \mathcal L ^\ep [u]:=\ep  u _t-\ep ^2 \Delta (u^m)- f(u)\leq 0
$$
in $\{(x,t):\, d(x,t)<-\ep|\ln \ep|p e^{t}+\ep\omega\}=\{(x,t):\,
z(x,t)<\omega\}$.

By using straightforward computations we get
$$
\begin{array}{lll}
\ep  u_t=(1-\ep)\left(d_t+\ep|\ln\ep|p\, e^{ t}\right)U'(z)\vsp\\
\ep ^2\Delta (u^m) = (1-\ep)^m|\n d|^2(U^m)''(z)+(1-\ep) ^m \ep
\Delta d (U^m)'(z)\,,
\end{array}
$$
where $z=z(x,t)$. Then using the ordinary differential equation
$(U^m)''+c^*U'+f(U)=0$, we see that
$$
\ep \mathcal L^\ep [u]=E_1+\cdots+E_4\,,
$$
with
\vsp \\
$\qquad\quad  E_1:=(1-\ep)\left[d_t+c^*-(1-\ep)^{m-1}\ep \Delta d (m U^{m-1})(z)+\ep |\ln \ep|p\, e^{ t}\right]U'(z)$\vsp \\
$\qquad\quad  E_2:=(1-\ep)^m (1-|\nabla d|^2)(U^m)''(z)$\vsp \\
$\qquad\quad  E_3:=\left((1-\ep)-(1-\ep)^m\right)(U^m)''(z)$\vsp \\
$\qquad\quad  E_4:=-f\left((1-\ep)U(z)\right)+(1-\ep)f(U(z))\,.$\vsp\\
In the following we shall denote by $C$ some positive constants
which do not depend on $\ep>0$ small enough (and may change from
place to place).

We start with some observations on the term $E_4$. Note that
\begin{equation}\label{truc}
f\left((1-\ep)u\right)-(1-\ep)f(u)=-\ep u f'(\theta)+\ep f(u)\,,
\end{equation} for some $\theta \in ((1-\ep)u,u)$. Hence
$$
|E_4|\leq C\ep\,.
$$
Moreover, since $f(1)=0$ and $f'(1)<0$, it follows from
\eqref{truc} that, for $u$ sufficiently close to 1,
\begin{equation}\label{prop}
 f\left((1-\ep)u\right)-(1-\ep)f(u)\geq \beta \ep u\,,
\end{equation}
for some $\beta >0$. Hence since $U(-\infty)=1$, by choosing
$\gamma \gg 1$ we see that
\begin{equation}\label{bidule} E_4\leq
-\beta \ep U(z) \leq -\frac 1 2 \beta \ep\quad\text{ for all
}z\leq -\gamma\,.
\end{equation}
In the following we distinguish three cases, namely \eqref{case1},
\eqref{case2} and \eqref{case3}.

\vsp Assume that
\begin{equation}\label{case1}
-\ep |\ln \ep| p e^{t} - \ep \gamma\leq d(x,t)< -\ep |\ln \ep|  p
e^{t} + \ep \omega\,,
\end{equation}
which in turn implies that $-\gamma\leq z<\omega$. Since $U'<0$ on
$(-\infty,\omega)$ and $U'(\omega)\in[-\infty,0)$, it holds that
$U'(z)\leq -\alpha$, for some $\alpha
>0$. If $\ep
>0$ is small enough \eqref{norme-un} shows that $E_2=0$; from \eqref{estimate}
we deduce that $|E_3|\leq -C\ep U'(z)$; moreover we have
$|E_4|\leq C\ep$. In view of \eqref{interface}, $E_1$ reduces to
\begin{equation}\label{reduces}
E_1= (1-\ep)\left[-(1-\ep)^{m-1}\ep \Delta d (m U^{m-1})(z)+\ep
|\ln \ep|p\, e^{ t}\right]U'(z)\,. \end{equation} Since
$|-(1-\ep)^{m-1}\ep \Delta d (m U^{m-1})(z)|\leq C\ep$, inequality
$E_1 \leq \frac 12 p\, \ep |\ln \ep| U'(z)$ holds. Collecting
theses estimates we have $$ \begin{array}{ll}{\mathcal L }^\ep
[u]&\leq
(\frac 12 p\, \ep |\ln \ep|-C\ep)U'(z) +C\ep\vsp\\
&\leq -\frac 1 4 p\, \alpha  \ep |\ln \ep|+C\ep\vsp\\
&\leq 0\,,
\end{array}
$$
if $\ep >0$ is sufficiently small.

\vsp Assume that
\begin{equation}\label{case2}
-d_0 \leq d(x,t)< -\ep |\ln \ep| p e^{ t} -\ep \gamma\,,
\end{equation}
which in turn implies that $ z<-\gamma$ so that \eqref{bidule}
implies $E_4 \leq 0$ . Here again \eqref{norme-un} shows that
$E_2=0$, from \eqref{estimate} we deduce that $|E_3|\leq -C\ep
U'(z)$, and $E_1$ reduces to \eqref{reduces}. Hence we collect
$$
{\mathcal L}^\ep [u]\leq U'(z)\left[-(1-\ep)\ep C+ (1-\ep) p\, \ep
|\ln \ep|-C\ep\right]\leq 0\,,
$$
for $\ep >0$ small enough.

 \vsp Assume that
\begin{equation}\label{case3}
-2d_0 \leq d(x,t)< -d_0\,,
\end{equation}
which in turn implies that, for $\ep >0$ small enough, $ z\leq
-\frac {d_0}{2\ep}$. In this range \eqref{norme-un} and
\eqref{interface} no longer apply but the exponential decay
\eqref{est-phi-U} shows that $|E_1|+|E_2|+|E_3|\leq C e^{-\lambda
\frac{d_0}{2\ep}}$.  Last $E_4 \leq - \frac 12\beta \ep$ (see
\eqref{bidule}) shows that, for $\ep >0$ small enough, ${\mathcal
L}^\ep[u]\leq 0$.

The lemma is proved.
\end{proof}

\subsection{Construction of super-solutions}\label{ss:super}

The construction of super-solutions is more involved: since we
want them to be positive it is no longer possible to use the
natural travelling wave $(c^*,U)$ which is compactly supported.
Therefore we shall first consider slightly larger speeds $c>c^*$
which provide faster travelling wave solutions which tend to
$+\infty$ in $-\infty$; then a small modification will provide us
positive and more regular functions which are \lq\lq nearly"
travelling wave solutions. Before making this argument more
precise, let us note that, as it will clearly appear below, the
possibility of the above strategy follows from \cite{Hos}.

\vskip 8 pt Let $\eta \in(0,\min(a,1-a))$ be arbitrary. Let $\epai
>0$ be fixed. Let us recall that we have fixed $0<T<T^{max}$, where
 $T^{max}$ denotes the time when the first
singularities occur in $\Pz$. Therefore we can select $\rho >0$
small enough so that the following holds. First the smooth
solution $(\Gamma _t ^c)$ of the free boundary problem
\[
 (P^0 _c)\quad\begin{cases}
 \, V_{n}=c:=c^*+\rho
 \quad \text { on } \Gamma_t ^c \vspace{3pt}\\
 \, \Gamma_t ^c \big|_{t=0}=\Gamma_0\,,
\end{cases}
\]
exists at least on $[0,T]$. Secondly, if we denote by $d^c(x,t)$
the cut-off signed distance function associated with $\Gamma
^c:=\cup _{0\leq t \leq T} (\Gamma_t ^c\times\{t\})$ then, for all
$(x,t) \in \Q$,
\begin{equation}\label{d-rho}
d(x,t)\geq \epai \Longrightarrow d^c(x,t)\geq \frac \epai 2\,.
\end{equation}

Since $c>c^*$, as explained in \cite[Remark 3.1]{Hos}, there
exists a faster travelling wave $(c,V)$ which satisfies the same
requirements as $(c^*,U)$ in the auxiliary problem
\eqref{eq-phi-poreux-probleme}, except that $V(-\infty)=+\infty$
rather than $U(-\infty)=1$. In particular, $V$ is still compactly
supported from one side.

 Next, for all $n\geq 1$, following the construction which comes before
Proposition 4.1 in \cite{Hos} (it consists in slightly modifying
the above travelling wave $(c,V)$), we can consider $(c,U_n)$ such
that
\begin{enumerate}
\item $U_n$ satisfies the ordinary differential equation
$$({U_n}^m)''+c{U_n}'+f(U_n)=0\quad\text{ on some $(-\infty,
Z_n)\,,$}$$ where ${U_n}'<0$ holds
 \item $U_n$ is
constant equal to some $(\delta _n)^{\frac 1{m-1}}>0$ on
$[Z_n,\infty)$ \item $(U_n)^{m-1} \in C^1(\R)$
\end{enumerate}
together with $U_n(0)=a$ and $U_n(-\infty)=+\infty$. Moreover
$\delta _n \to 0$ as $n\to \infty$, so that we can fix $n_0 \gg1$
such that $(\delta _{n_0}) ^{\frac 1{m-1}}\leq \eta$.

As a conclusion, if we denote $U_{n_0}$, $\delta _{n_0}$ and
$Z_{n_0}$ by $W$, $\delta$ and $Z$ we are now equipped with
$(c,W)$ such that $W^{m-1} \in C^1(\R)$ and
\begin{equation}\label{doublev}
\left\{\begin{array}{ll}
({W}^m) '' +c W'+f(W)=0 &\text{ on } (-\infty,Z) \vspace{5pt}\\
W(-\infty)= +\infty\vspace{5pt}\\
 W(0)=a\vspace{5pt}\\
 {W}'<0 &\text{ on } (-\infty,Z)\vspace{5pt}\\
 W\equiv \delta  ^{\frac
1{m-1}}\leq \eta &\text{ on }[Z,\infty)\,.
\end{array} \right.
\end{equation}

\vskip 8 pt We are now looking for super-solutions in the form
\begin{equation}\label{super-sol}
u_\ep ^+(x,t):=W\left(\frac{d^c(x,t)-\ep |\ln
\ep|Ke^t}{\ep}\right)\,.
\end{equation}
In the sequel we set
\begin{equation}\label{def:z-plus}
z^+ _\ep (x,t):=\di \frac{d^c (x,t)- \ep|\ln \ep|Ke^t}\ep\,.
\end{equation}

\begin{rem}[The sub-domain $\Sigma$]
We shall consider below a sub-domain $\Sigma$ whose slice at time
$t$, namely $\sigma_t:=\{x:\,(x,t)\in \Sigma\}$, is the region
enclosed between $\partial \Omega$ and (more or less) $\Gamma ^c
_t$. We shall prove that $u_\ep ^+$ is a super-solution in
$\Sigma$. Thanks to \eqref{avantage} this will be sufficient for
our purpose (see Section \ref{s:proof}).
\end{rem}

Denote by $-\theta$ the point where $W(-\theta)=1+\eta$. For each
$0\leq t\leq T$, define the open set
$$
\sigma _t:=\{x\in\Omega:\, d^c(x,t) > \ep |\ln \ep|K e^t-\ep
\theta\}=\{x:\,z^+_\ep (x,t) >-\theta\}\,,
$$
and the sub-domain
$$
\Sigma:=\cup _{0 < t < T} (\sigma _t \times \{t\})\,.
$$
Note that the lateral boundary of $\Sigma$ is made of $\partial
_{\text{out}} \Sigma:=\partial \Omega \times (0,T)$ and $\partial
_{\text{in}} \Sigma:=\cup _{0<t<T} (s_t \times \{t\})$ where $s_t$
denotes the smooth hypersurface
$$
s_t:=\{x\in\Omega:\, d^c(x,t) = \ep |\ln \ep|K e^t-\ep
\theta\}=\{x:\,z^+_\ep (x,t) =-\theta\}\,.
$$

\begin{lem}[Super-solutions in $\Sigma$]\label{lem:sup}Let $\eta \in(0,\min(a,1-a))$ be arbitrary and let $\epai
>0$ be fixed. Then, for all $K>0$, all $\ep>0$ small enough,
$u ^+ _\ep$ is such that
\begin{enumerate}
\item ${\cal L}^\ep[u^+ _\ep]:=(u^+_\ep)_t-\ep \Delta
((u^+_\ep)^m)-\eun f(u^+_\ep)\geq 0\; \text{ in } \Sigma$
 \item
$\di{\frac{\partial ((u^+_\ep)^m)}{\partial \nu}}=0\; \text{ on }\
\partial
_{\text{out}} \Sigma=\partial \Omega \times (0,T)$\, \item
$u^+_\ep \equiv 1+\eta\;\text{ on } \partial _{\text{in}}
\Sigma=\cup _{0<t<T} (s_t \times \{t\})\,.$
\end{enumerate}
\end{lem}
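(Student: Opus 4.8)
The plan is to mimic the structure of the proof of Lemma~\ref{lem:sub}, exploiting the new travelling wave $(c,W)$ in place of $(c^*,U)$ and the distance function $d^c$ associated with the faster speed $c=c^*+\rho$. We set $u=u^+_\ep$ and $z=z^+_\ep$ throughout. First we check the boundary and regularity conditions $(ii)$ and $(iii)$: since $W^{m-1}\in C^1(\R)$ the function $\n(u^m)$ is continuous, and because $d^c$ is constant equal to $2d_0$ near $\partial\Omega$ (so $z$ is large and negative there) we do not actually land in the flat part of $W$ on $\partial_{\mathrm{out}}\Sigma$; rather $W'<0$ there, but $\n d^c=0$ near $\partial\Omega$ gives $\partial((u^+_\ep)^m)/\partial\nu=0$ directly. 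Condition $(iii)$ is immediate from the definition of $s_t$ and the choice of $-\theta$ as the point where $W(-\theta)=1+\eta$. So the whole content is the differential inequality $(i)$ on $\Sigma$, i.e. on the set $\{z>-\theta\}$.

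Next I would expand $\ep\,\mathcal L^\ep[u]$ exactly as in the sub-solution computation. Writing $\ep u_t=(d^c_t+\ep|\ln\ep|Ke^t)W'(z)$ and $\ep^2\Delta(u^m)=|\n d^c|^2(W^m)''(z)+\ep\,\Delta d^c\,(W^m)'(z)$, and using the ODE $(W^m)''+cW'+f(W)=0$ valid for $z<Z$, one gets a decomposition $\ep\,\mathcal L^\ep[u]=F_1+F_2+F_3$ with, schematically,
\[
F_1=\big[d^c_t+c-\ep\,\Delta d^c\,(mW^{m-1})(z)+\ep|\ln\ep|Ke^t\big]W'(z),\qquad F_2=(1-|\n d^c|^2)(W^m)''(z),
\]
and $F_3$ the $f$-term, which vanishes where the ODE holds and equals $-\eun f(\delta^{1/(m-1)})\ge0$ on $\{z\ge Z\}$ because $\delta^{1/(m-1)}\le\eta<a$ forces $f(\delta^{1/(m-1)})<0$. (This is the place where the one-sided flat tail $W\equiv\delta^{1/(m-1)}$ pays off: on the tail the reaction alone already makes $u$ a super-solution, and there $W'=0$, $(W^m)''=0$, so $F_1=F_2=0$.) On the region $\{-\theta<z<Z\}$ the analysis splits the same way as before into three subregions according to $d^c$: near the interface where $|d^c|<d_0$ we use $|\n d^c|=1$ (so $F_2=0$) and the equation of motion $(P^0_c)$, giving $d^c_t+c=0$, so that $F_1=[-\ep\,\Delta d^c\,(mW^{m-1})(z)+\ep|\ln\ep|Ke^t]W'(z)$; since $W$ is bounded on $[-\theta,Z]$ the bracket is $\ge\tfrac12 K\ep|\ln\ep|$ for $\ep$ small, and $W'(z)<0$ makes $F_1\le-\tfrac12 K\ep|\ln\ep|\,|W'(z)|\le 0$ (note that, crucially, $|W'(z)|$ is bounded \emph{below} by a positive constant on the compact interval $[-\theta,Z]$, because $W'<0$ there and the only place $W'$ could vanish is at $Z$, where instead $W'$ has a finite nonzero limit by the analogue of \eqref{derivee-en-omega}). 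In the intermediate range $d_0\le|d^c|\le 2d_0$ the cut-off makes $\n d^c=0$ and $d^c_t=0$, so $F_1=cW'(z)+\ep|\ln\ep|Ke^tW'(z)\le 0$ (sign of $W'$), $F_2=(W^m)''(z)$, and one must check the remaining sign; here I would instead note that this range corresponds to $z$ very negative, use the exponential-type bounds on $W,W',W''$ as $z\to-\infty$ (which follow from \cite{Hos} just as \eqref{est-phi-U} does for $U$) together with $W(-\infty)=+\infty$, and argue that either $F_3=0$ and the other terms are controlled, or—more cleanly—observe that $\sigma_t$ only reaches $d^c\ge\ep|\ln\ep|Ke^t-\ep\theta$, i.e. $d^c$ can be made $\ge -\ep\theta$, so in fact on $\Sigma$ we have $d^c\ge-\ep\theta$ and the ``very negative $d^c$'' region simply does not occur; the only subtlety is then the transition between $|d^c|<d_0$ and $d^c$ small positive, handled by the $|\n d^c|=1$ case.

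The main obstacle I expect is the bookkeeping around the point $z=Z$ where $W^{m-1}$ is only $C^1$: one must make sure that $\n(u^m)$ is genuinely continuous (it is, since $(W^{m})'=\tfrac{m}{m-1}W\,(W^{m-1})'$ and $(W^{m-1})'$ is continuous with $(W^{m-1})'(Z)=0$ by condition (ii)--(iii) of the construction), and that Lemma~\ref{lemma-sup-poreux} can be applied with ``$\support$'' replaced by a super-solution analogue on the subdomain $\Sigma$ — i.e. that the Neumann condition is only needed on $\partial\Omega$, which is why conditions $(ii)$--$(iii)$ of the present lemma are stated as they are rather than as a support condition. A secondary technical point is verifying \eqref{d-rho}-type comparability so that $\Sigma$ indeed contains the relevant region; but that was arranged in advance by the choice of $\rho$. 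Modulo these points the estimates are the same flavour as in Lemma~\ref{lem:sub}, with $(1-\ep)$ replaced by $1$ (no $E_3$-type term) and with the reaction sign on the flat tail replacing the $\beta\ep$ trick of \eqref{bidule}.
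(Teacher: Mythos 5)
Your proposal is in substance the paper's own proof: (ii) from $\nabla d^c=0$ near $\partial\Omega$, (iii) from $W(-\theta)=1+\eta$; on $\{z\ge Z\}$ the flat tail where only the reaction term survives and $f(\delta^{1/(m-1)})<0$ gives the sign; on $\{-\theta<z<Z\}$ the very constraint $-\theta<z<Z$ forces $d^c$ into an $\mathcal O(\ep|\ln\ep|)$ window, so \eqref{norme-un} and \eqref{interface} (with $d^c,c$ in place of $d,c^*$) apply and the operator reduces to $-\ep W'\left(|\ln\ep|Ke^t+\Delta d^c\, mW^{m-1}\right)\ge 0$. Your ``cleaner'' observation that very negative $d^c$ never occurs on $\Sigma$ is exactly why the paper needs no three-case splitting here, so the detours through the intermediate range $d_0\le |d^c|\le 2d_0$ and through decay estimates for $W$ (which in any case do not exist, since $W(-\infty)=+\infty$) can simply be deleted. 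Two side assertions are incorrect, though neither is needed: near $\partial\Omega$ one has $d^c=2d_0>0$, so $z\approx 2d_0/\ep$ is large \emph{positive} and one sits in the flat tail there (the Neumann condition follows from $\nabla d^c=0$ regardless); and $|W'|$ is \emph{not} bounded below on $[-\theta,Z]$ --- since $W^{m-1}\in C^1(\R)$ is constant on $[Z,\infty)$ with $W(Z)=\delta^{1/(m-1)}>0$, one has $W'(Z^-)=0$, in contrast with \eqref{derivee-en-omega}, which concerns the wave vanishing at $\omega$. Fortunately no such lower bound is ``crucial'': $W'\le 0$ together with positivity of the bracket already yields the desired sign, which is all the paper uses (unlike the sub-solution proof, there is no $+C\ep$ remainder to absorb here).
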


\begin{proof} In this proof (and only in this proof) we put $u^+_\ep=u$ and
$z^+_\ep=z$. Recall that $d^c$ is constant near the boundary
$\partial \Omega$ so that the Neumann boundary condition $(ii)$ is
fulfilled. Moreover the Dirichlet boundary condition $(iii)$ is
clear from the definition of $s_t$ and the fact that
$W(-\theta)=1+\eta$.

Therefore it remains to prove that $ \ep \mathcal L ^\ep [u]=\ep u
_t-\ep ^2 \Delta (u^m)- f(u)\geq 0$ in $\Sigma=\{(x,t):\;
z(x,t)>-\theta\}$. If $z(x,t)\geq Z$ then $\mathcal L ^ \ep
[u]=\mathcal L ^\ep [\delta^{\frac 1{m-1}}] \geq 0$. We now assume
that $z(x,t)\in(-\theta,Z)$, i.e.
\begin{equation}\label{distance-petite}
\ep|\ln \ep|Ke^t-\ep\theta  <d^c (x,t)<\ep|\ln \ep|Ke^t+ \ep Z\,.
\end{equation}
Straightforward computations combined with the ordinary
differential equation $(W^m)''+cW'+f(W)=0$ yield
$$
\ep \mathcal L^\ep [u]=(d^c_t+c)W' -\ep|\ln\ep|Ke^t W'-\ep \Delta
d^c (W^m)'+(1-|\nabla d^c|^2)(W^m)''\,.
$$
If $\ep>0$ is small enough, then \eqref{distance-petite} combined
with \eqref{norme-un} and \eqref{interface} --- with $d^c$ and $c$
playing the roles of $d$ and $c^*$--- shows that the above
equality reduces to
$$\ep \mathcal L^\ep [u]=-\ep
W' \left(|\ln\ep|K e^t+\Delta d^c (m W^{m-1})\right)\,.$$ Since
$W'\leq 0$ we have $\ep \mathcal L^\ep [u]\geq 0$, for $\ep
>0$ small enough.

The lemma is proved.
\end{proof}

\section{Proof of Theorem \ref{width}}\label{s:proof}

\subsection{A generation of interface property}

We first state a result on the generation of interface.

\begin{lem}[Generation of interface]\label{lem:generation} Assume
$m\geq 2$. Let $\eta>0$ be arbitrary small.  Then, for all $x\in
\Omega$, we have, for $\ep>0$ small enough,
\begin{equation}\label{machin1}
0\leq \ue(x,t^\ep)\leq 1+\eta\,,
\end{equation}
and there exists $M_0>0$ such that, for $\ep>0$ small enough,
\begin{align}
&u_0(x) \geq a+ M_0 \ep|\ln \ep| \Longrightarrow \ue(x,t^\ep) \geq 1-\ep\label{machin2}\\
&u_0(x) \leq a- M_0 \ep|\ln \ep| \Longrightarrow \ue(x,t^\ep) \leq
\ep\,,\label{machin3}
\end{align}
where $t^\ep=\mu^{-1}\ep |\ln\ep|$.
\end{lem}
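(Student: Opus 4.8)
\medskip
\noindent\textbf{Proof proposal.}
On the short interval $[0,t^\ep]$ the reaction term dominates: in the fast time $\tau=t/\ep$ the equation of $\Pe$ reads $u_\tau=\ep^2\Delta(u^m)+f(u)$, so $\ue$ is, to leading order and pointwise in $x$, governed by the bistable ordinary differential equation $\dot Y=f(Y)$. My plan is to make this precise by a comparison argument (Theorem~\ref{Existence-comparison}) with sub- and super-solutions built from that ODE. Let $Y(\tau,\xi)$ solve $\partial_\tau Y=f(Y)$, $Y(0,\xi)=\xi$. First I would collect the elementary facts: $Y(\tau,a)\equiv a$; $Y(\tau,\xi)\to 1$ for $\xi>a$ and $Y(\tau,\xi)\to 0$ for $\xi<a$ as $\tau\to\infty$; and the identities $\partial_\xi Y(\tau,\xi)=\exp\!\int_0^\tau f'(Y)\,ds$, $\ \partial_\xi^2 Y/\partial_\xi Y=\int_0^\tau f''(Y)\,\partial_\xi Y\,ds$. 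From $f'(a)=\mu>0$ and the exponential stability of $0$ and $1$ one then gets the two quantitative statements that drive the whole proof: \emph{(a)} $\partial_\xi Y\le Ce^{\mu\tau}$ and $|\partial_\xi^2 Y|\le Ce^{2\mu\tau}$ while $\xi$ stays near $a$, whereas $\partial_\xi Y$ and $\partial_\xi^2 Y$ stay bounded (indeed decay) once $Y$ is close to $0$ or $1$; \emph{(b)} since $t^\ep=\mu^{-1}\ep|\ln\ep|$ corresponds to $\tau^\ep=\mu^{-1}|\ln\ep|$, one has $e^{\mu\tau^\ep}=\ep^{-1}$, hence for $M_0$ large enough $Y(\tau^\ep,\xi)\ge 1-\ep$ whenever $\xi\ge a+M_0\ep|\ln\ep|$, $\ Y(\tau^\ep,\xi)\le\ep$ whenever $\xi\le a-M_0\ep|\ln\ep|$, and $0\le Y(\tau^\ep,\xi)\le 1+\eta$ for every $\xi\in[0,M]$.

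I would then set $w^\pm_\ep(x,t):=Y\!\bigl(t/\ep,\ u_0(x)\pm\ep P(e^{\mu t/\ep}-1)\bigr)$ for a constant $P>0$ to be fixed. Because the shift $\pm\ep P(e^{\mu t/\ep}-1)$ is spatially constant and $\partial_\tau Y=f(Y)$, the reaction term cancels exactly and one is left with the identity $\ep\mathcal L^\ep[w^\pm_\ep]=\pm\,\ep P\mu\,e^{\mu t/\ep}\,\partial_\xi Y-\ep^2\Delta\!\bigl((w^\pm_\ep)^m\bigr)$. Since $\partial_\xi Y>0$, it suffices to dominate $\ep^2|\Delta((w^\pm_\ep)^m)|$ by $\ep P\mu e^{\mu t/\ep}\partial_\xi Y$; writing $\nabla w^\pm_\ep=\partial_\xi Y\,\nabla u_0$ and expanding $\Delta((w^\pm_\ep)^m)$, this follows --- uniformly for $\ep$ small and any fixed $P$ --- from estimate~\emph{(a)}, the bounds $\|\nabla u_0\|_\infty,\|\Delta u_0\|_\infty<\infty$, and the hypothesis $m\ge 2$, which keeps the factor $(w^\pm_\ep)^{m-2}$ appearing in $\Delta((w^\pm_\ep)^m)$ bounded as $w^\pm_\ep\to 0$. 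Thus $\ep\mathcal L^\ep[w^+_\ep]\ge 0\ge\ep\mathcal L^\ep[w^-_\ep]$ in $\Omega\times(0,T)$. Since $\mathrm{Supp}\,u_0\subset\subset\Omega$ and $Y(\tau,0)\equiv 0$, after truncating $w^-_\ep$ at the level $0$ (itself a solution) it vanishes in a neighbourhood of $\partial\Omega$, so the Neumann condition holds there and $\nabla((w^-_\ep)^m)$ is continuous across its free boundary; hence Lemma~\ref{lemma-sup-poreux} applies and $w^-_\ep$ is an admissible sub-solution, while $w^+_\ep$ is a smooth positive super-solution with $w^+_\ep\le 1+\eta$. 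As $w^-_\ep(\cdot,0)=u_0=w^+_\ep(\cdot,0)$, Theorem~\ref{Existence-comparison} gives $w^-_\ep\le\ue\le w^+_\ep$ on $Q_T$.

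It remains to evaluate at $t=t^\ep$. Using the monotonicity of $Y$ in $\xi$ and $\ep P(e^{\mu t^\ep/\ep}-1)=P(1-\ep)=O(1)$, estimate~\emph{(b)} (after absorbing this $O(\ep)$ loss into a slightly larger $M_0$) yields precisely \eqref{machin1}, \eqref{machin2} and \eqref{machin3}. The step I expect to be genuinely delicate is~\emph{(b)}: the budget $\tau^\ep=\mu^{-1}|\ln\ep|$ is essentially just the time needed to leave the $O(\ep|\ln\ep|)$-neighbourhood of the unstable state $a$, so squeezing out the \emph{sharp} target $\ge 1-\ep$ (rather than merely $1-o(1)$) calls for careful bookkeeping of the competing exponential rates at $a$, $0$ and $1$, and in practice for a comparison of $f$ with suitable nonlinearities $\underline f\le f\le\overline f$ having the same slope $\mu$ at $a$; one must do this while keeping the diffusion correction small enough not to spoil $w^\pm_\ep(\cdot,0)=u_0$. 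The degeneracy of $\Delta(u^m)$ at $u=0$ --- singular when $1<m<2$ --- must be controlled throughout, which is exactly why $m\ge 2$ is assumed. This is the generation-of-interface result of \cite{A-Hil}, from which the present lemma is quoted.
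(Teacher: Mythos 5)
Your overall strategy is exactly the one the paper relies on (the paper itself only sketches the proof and quotes \cite{A-Hil}, \cite{A-Hil-Mat}): compare $\ue$ with $Y(t/\ep;\,u_0(x)\pm\mathrm{drift}(t))$ where $Y$ solves the bistable ODE, absorb the diffusion term using the derivative estimates for $Y$, and invoke the sharpened stability/instability estimates at $\tau^\ep=\mu^{-1}|\ln\ep|$. The role you assign to $m\ge 2$ (boundedness of $(w^\pm_\ep)^{m-2}$ in $\Delta((w^\pm_\ep)^m)$) and the truncation of the sub-solution at level $0$ so that Lemma \ref{lemma-sup-poreux} applies are also as in the quoted references; note only that the bound you really need is the ratio estimate $|\partial_\xi^2 Y|\le C(e^{\mu\tau}-1)\,\partial_\xi Y$, which does follow from the integral identity you record together with $\partial_\xi Y\le Ce^{\mu\tau}$, rather than the raw bound $|\partial_\xi^2 Y|\le Ce^{2\mu\tau}$.

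There is, however, one genuine error: the scaling of the drift. You take $w^\pm_\ep=Y\bigl(t/\ep;\,u_0(x)\pm\ep P(e^{\mu t/\ep}-1)\bigr)$, and at $t=t^\ep$ the shift equals $P(1-\ep)$, which --- as you yourself compute --- is $O(1)$, not the ``$O(\ep)$ loss'' you then propose to absorb into a larger $M_0$. An $O(1)$ shift destroys the conclusion: for the super-solution, $u_0(x)\le a-M_0\ep|\ln\ep|$ yields an argument $u_0(x)+P(1-\ep)$ that may lie well above $a$, so $Y(\tau^\ep;\cdot)$ is then close to $1$ rather than below $\ep$, and \eqref{machin3} (and symmetrically \eqref{machin2}) cannot be deduced. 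The correct ansatz, which is the one used in the paper, is $Y\bigl(t/\ep;\,u_0(x)\pm\ep^2 C^*(e^{\mu t/\ep}-1)\bigr)$: the shift at $t^\ep$ is then $C^*\ep(1-\ep)=o(\ep|\ln\ep|)$ and is genuinely absorbed by enlarging $M_0$. The differential inequality still closes with this weaker drift, since it contributes $\pm\ep^2 C^*\mu\, e^{\mu t/\ep}\partial_\xi Y$ while $\ep^2|\Delta((w^\pm_\ep)^m)|\le C\ep^2 e^{\mu t/\ep}\partial_\xi Y$ by the estimates above --- but the price is that $C^*$ must now be chosen large enough to dominate that constant $C$; it can no longer be ``any fixed $P$''. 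With this one correction your argument matches the paper's.
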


\begin{proof} We only give an outline since the arguments can be found in \cite{A-Hil-Mat}, \cite{A-Hil}.

Denote by $Y(\tau;\xi)$ the solution of the bistable ordinary
differential equation $Y_\tau=f(Y)$ on $(0,\infty)$ supplemented
with the initial condition $Y(0;\xi)=\xi$. Modulo a change of the
time variable, we can use the sub- and super-solutions constructed
in \cite{A-Hil} to deduce that, for some $C^*>0$, for $\ep >0$
small enough,
\begin{multline*}
\left[Y\left(\frac{t^\ep}{\ep};\,u_0(x)-\ep^2
C^*(\emutep-1)\right)\right]^+ \\
\leq \ue(x,t^\ep)\leq \left[Y\left(\frac{t^\ep}{\ep};\,u_0(x)
+\ep^2 C^*(\emutep-1)\right)\right]^+\,.
\end{multline*}

 Next a straightforward modification of \cite[Lemma
3.9]{A-Hil-Mat} ---which specifies the instability of the
equilibrium $Y\equiv a$ and the stability of the equilibria
$Y\equiv 0$, $Y\equiv 1$--- shows that for $\ep
>0$ small enough, for all $\xi\in (-1,2)$, we have
$$
Y(\mu ^{-1} | \ln \ep |;\xi) \leq 1+\eta\,,
$$
and that
\begin{align*}
&\xi \geq a+  \ep|\ln \ep| \Longrightarrow Y(\mu ^{-1}| \ln \ep
|;\xi) \geq 1-\ep\\
&\xi \leq a-  \ep|\ln \ep| \Longrightarrow Y(\mu ^{-1}| \ln \ep
|;\xi) \leq \ep\,.
\end{align*}

The combination of the above arguments completes the proof of the
lemma.
\end{proof}

\subsection{Proof of the main theorem}

We are now in the position to prove Theorem \ref{width}. To that
purpose we show that solutions are in between the propagation of
interface sub- and super-solutions at time $t^\ep$.

\begin{proof} Assume $m\geq 2$. Let $\eta \in(0,\min(a,1-a))$ be arbitrary.
First note that the comparison principle directly implies that
inequality \eqref{machin1} persists for later times, i.e.
\begin{equation}\label{avantage}
\ue(x,t)\in[0,1+\eta]\quad\text{ for all $x\in \Omega$ and all
$t^\ep \leq t\leq T\,.$}
\end{equation}

Since $\n u _0 \neq 0$ everywhere on $\Gamma _0=\{x\in\om:\,
u_0(x)=a\}$ and since $\Gamma _0$ is a compact hypersurface, we
can find a positive constant $M_1$ such that
\begin{equation}\label{corres}
\begin{array}{ll}\text { if } \quad d(x,0)  \leq \ -M_1 \ep
|\ln \ep|
&\text { then } \quad u_0(x) \geq a +M_0 \ep|\ln\ep| \vspace{3pt}\\
\text { if } \quad d(x,0)  \geq {}M_1 \ep |\ln\ep|& \text { then }
\quad u_0(x) \leq a -M_0 \ep |\ln\ep|\,,
\end{array}
\end{equation}
with $M_0$ the constant which appears in Lemma
\ref{lem:generation}.

We first investigate the behavior \lq\lq inside the interface". In
view of \eqref{machin}, we can choose $p>0$ large enough so that,
for $\ep>0$ small enough, the sub-solution $u _\ep ^-$ defined in
\eqref{sub-sol} is such that the set $\{x:\, u_\ep ^-(x,0)>0\}$ is
included in $\{x:\; d(x,0)\leq -M_1 \ep |\ln \ep|\}$. Therefore,
from the correspondence \eqref{corres}, the estimate
\eqref{machin2} and the fact that $u _\ep ^- (x,0)\leq 1- \ep$, we
deduce that, for all $x\in \Omega$,
$$
u_ \ep ^- (x,0)\leq \ue(x,t^\ep)\,.
$$
It follows from the comparison principle that
\begin{equation}\label{dessous}
u_\ep^-(x,t-t^\ep)=(1-\ep)U\left(\frac{d(x,t-t^\ep)+\ep|\ln
\ep|p\, e^{t-t^\ep}}\ep\right) \leq u^\ep (x,t)\,,
\end{equation}
for all $(x,t)\in\om\times [t^\ep,T]$. We choose $\mathcal C\gg
\max(c^* \mu^{-1},p\,e^T,\lambda)$ so that
\begin{equation}\label{choice} \frac{-\mathcal C
\ep|\ln\ep|+c^*\mu^{-1}\ep|\ln\ep|+\ep|\ln \ep|p\,
e^{t-t^\ep}}\ep\leq -\frac {\mathcal C} 2 |\ln \ep|\leq -\frac 2
\lambda |\ln \ep|\,,
\end{equation}
where $\lambda>0$ is the constant appearing in \eqref{est-phi-U}.
We take $x\in \om _t^{(1)}\setminus\mathcal
N_{\C\ep|\ln\ep|}(\Gamma _t)$, i.e.
\begin{equation}\label{d-moins}
d(x,t)\leq -\mathcal C \ep|\ln \ep|\,,
\end{equation}
 and prove below that $\ue(x,t)\geq 1-2\ep $, for $t^\ep\leq t\leq
 T$. Note that, for $\ep >0$ small enough,
\begin{equation}\label{lien}
 d(x,t-t^\ep)=d(x,t)+c^* t^\ep\,,
\end{equation}
which, combined with \eqref{dessous} and \eqref{choice}, implies
$$
\begin{array}{lll}
 \ue(x,t) &\geq
(1-\ep)U\left(-\frac 2 \lambda|\ln\ep|\right)\vsp\\
&\geq (1-\ep)(1-C\ep ^2)\vsp\\
 &\geq 1-2\ep\,,
\end{array}
$$
where we have used \eqref{est-phi-U}.

Last we investigate the behavior \lq\lq outside the interface".
Fix $\epai>0$ arbitrarily small. For such $\epai >0$, we follow
the strategy of subsection \ref{ss:super} to construct
super-solutions in $\Sigma$. More precisely define $K=2M_1$, where
$M_1>0$ is the constant that appears in \eqref{corres}; then
construct $u_\ep ^+(x,t)$ as in \eqref{super-sol}. In view of
Lemma \ref{lem:sup} $(ii)$, $(iii)$, the super-solution
$u^+_\ep(x,t)$ and the solution $\ue(x,t+t^\ep)$ satisfy a Neumann
boundary condition on $\partial _{\text{out}} \Sigma$ and  ---
taking advantage of \eqref{avantage}--- are ordered on $\partial
_{\text{in}}\Sigma$. Last we claim that (note that
$d^c(x,0)=d(x,0)$ since $\Gamma _0 ^c=\Gamma _0$)
\begin{equation}\label{ordre}
 \ue(x,t^\ep)\leq W\left(\frac{d(x,0)-2M_1\ep |\ln
\ep|}{\ep}\right)=u_ \ep ^+ (x,0)\,,
\end{equation}
for all  $x\in \sigma _0 =\{x:\;d(x,0)>2M_1\ep|\ln \ep|-\ep
\theta\}$. Indeed, \eqref{corres} and \eqref{machin3} show that
$\ue(x,t^\ep)\leq \ep$ and the conclusion \eqref{ordre} follows
from the fact that $\delta ^{\frac 1 {m-1}}\leq W$. Hence the
comparison principle yields
\begin{equation}\label{dessus} u^\ep
(x,t) \leq W\left(\frac{d^c(x,t-t^\ep)- \ep |\ln \ep|2 M_1
e^{t-t^\ep}}{\ep}\right)= u_\ep^+(x,t-t^\ep) \,,
\end{equation}
for all $(x,t)\in \Sigma$ with $t^\ep \leq t \leq T$. We take
$x\in \om _t^{(0)}\setminus\mathcal N_{\epai}(\Gamma _t)$, i.e.
\begin{equation}\label{d-plus}
d(x,t)\geq \epai\,,
\end{equation}
 and prove below that $\ue(x,t)\leq \eta $, for $t^\ep\leq t\leq
 T$. From \eqref{d-rho} we deduce that $(x,t)\in \Sigma$ so that \eqref{dessus} applies.
 Note also that $ d^c(x,t-t^\ep)=d^c(x,t)+c t^\ep$. Therefore we
 infer from \eqref{d-rho} that, for $\ep >0$ small enough,
 $d^c(x,t-t^\ep)\geq\frac \epai 3$ which in turn implies
 $$
 u_\ep ^+(x,t-t^\ep)\leq W\left(\frac{\frac \epai 3 -2 M_1 e^T \ep
|\ln \ep|}{\ep}\right)= \delta  ^{\frac 1{m-1}}\leq \eta\,,
$$
since $W(+\infty)= \delta  ^{\frac 1{m-1}}$. Conclusion follows
from \eqref{dessus}.

 Theorem
\ref{width} is proved.\end{proof}

\end{document}